\documentclass[11pt,reqno]{amsart}

\usepackage{amssymb,amscd}

\usepackage{hyperref}
\hypersetup{colorlinks=false}

\usepackage{color}

\theoremstyle{plain}

\newtheorem{thm}{Theorem}[section]
\newtheorem{lem}[thm]{Lemma}
\newtheorem{cor}[thm]{Corollary}

\theoremstyle{definition}

\newtheorem{defn}[thm]{Definition}

\theoremstyle{remark}

\newcommand{\N}{{\mathbb N}}
\newcommand{\Z}{{\mathbb Z}}
\newcommand{\R}{{\mathbb R}}

\newcommand{\FF}{{\mathcal{F}}}
\newcommand{\GG}{{\mathcal{G}}}
\newcommand{\HH}{{\mathcal{H}}}

\newcommand{\fH}{{\mathfrak{H}}}
\newcommand{\fG}{{\mathfrak{G}}}

\newcommand{\bh}{\mathbf{h}}

\newcommand{\im}{\operatorname{im}}

\newcommand{\id}{\operatorname{id}}

\newcommand{\dom}{\operatorname{dom}}
\newcommand{\vol}{\operatorname{vol}}
\newcommand{\germ}{\operatorname{germ}}
\newcommand{\card}{\operatorname{card}}
\newcommand{\PSL}{\operatorname{PSL}}

\definecolor{darkgreen}{cmyk}{1,0,1,.2}
\definecolor{m}{rgb}{1,0.1,1}

 

\newdimen\theight
\def\TeXref#1{%
             \leavevmode\vadjust{\setbox0=\hbox{{\tt
                     \quad\quad  {\small \textrm #1}}}%
             \theight=\ht0
             \advance\theight by \lineskip
             \kern -\theight \vbox to
             \theight{\rightline{\rlap{\box0}}%
             \vss}%
             }}%


\title{Growth of some transversely homogeneous foliations}

\author[J.A. \'Alvarez L\'opez]{Jes\'us A. \'Alvarez L\'opez}
\address{Departamento de Xeometr\'{\i}a e Topolox\'{\i}a\\
         Facultade de Matem\'aticas\\
         Universidade de Santiago de Compostela\\
         15782 Santiago de Compostela\\ Spain}
\email{jesus.alvarez@usc.es}
\thanks{The first author is partially supported by MICINN, grant MTM2011-25656}

\author[R. Wolak]{Robert Wolak}
\address{Jagiellonian University\\
         Faculty of Mathematics and Computer Science\\
         Institute of Mathematics\\
         ul. prof. Stanis\l awa \L ojasiewicza 6\\ 
         30-348 Krak\'ow\\ 
         Poland}
\email{robert.wolak@im.uj.edu.pl}

\date{\today}

\subjclass{57R30}

\keywords{Transversely homogeneous foliation, pseudogroup, growth}

\begin{document}

\maketitle

\begin{abstract}
  For transversely homogeneous foliations on compact manifolds whose global holonomy group has connected closure, it is shown that either all holonomy covers of the leaves have polynomial growth with degree bounded by a common constant, or all holonomy covers of the leaves have exponential growth. This is an extension of a recent answer given by Breuillard and Gelander to a question of Carri\`ere. Examples of transversely projective foliations satisfying the above condition were constructed by Chihi and ben~Ramdane.
\end{abstract}

\section{Introduction}\label{s: intro}

Let $G$ be a Lie group, and $P\subset G$ a closed subgroup. The concept of (transversely homogeneous) $(G,G/P)$-foliation was introduced by Blumenthal \cite{Blumenthal1979}: it is a smooth foliation with the structure given by a defining cocycle with values in the $G$-manifold $G/P$. 
  
Assume that the $G$-action on $G/P$ is faithful, and $G/P$ is connected. Let $\FF$ be a $(G,G/P)$-foliation on a compact connected manifold $M$. Then Blumenthal proved that there is a homomorphism $\bh:\pi_1(M)\to G$, well-defined up to conjugation, whose image is denoted by $\Gamma$, such that the lift $\widetilde\FF$ to the cover $\widetilde M$ of $M$ associated to $\ker\bh$ is given by a $\Gamma$-equivariant submersion $D:\widetilde M\to G/P$, where $\Gamma$ acts on $\widetilde M$ via deck transformations \cite[Theorem~1]{Blumenthal1979}. It is said that $\Gamma$ is the ({\em global\/}) {\em holonomy group\/} of $\FF$. 

Moreover Blumenthal proved that, if $\pi_1(M)$ has non-exponential growth (respectively, polynomial growth of degree $d$), then all leaves of $\FF$ have non-exponential growth (respectively, polynomial growth of degree $d$) \cite[Theorem~1]{Blumenthal1979}. (Recall that the growth is one of the classical invariants of leaves of foliations on compact manifolds \cite{Plante1973,Plante1975}.)
  
When $\FF$ is a Lie $G$-foliation ($P=\{1\}$), Carri\`ere has shown that \cite{Carriere1988}:
  \begin{itemize}
    
    \item the leaves of $\FF$ are F\o lner if and only if $G$ is solvable; 
      
    \item the leaves of $\FF$ have polynomial growth if and only if $G$ is nilpotent; and, 
      
    \item  in the last case, the leaves of $\FF$ have polynomial growth with degree less than or equal to the degree of nilpotence of $G$. 
      
  \end{itemize}
Carri\`ere asked in \cite{Carriere1988} about the existence of a Lie $G$-foliation on a compact manifold whose leaves have neither polynomial nor exponential growth. This question was recently answered by Breuillard and Gelander \cite[Theorem~10.1]{BreuillardGelander2007}, obtaining the following dichotomy as a consequence of their study of a topological Tits alternative: 
  \begin{itemize}
  
    \item either all leaves of $\FF$ have polynomial growth with degree bounded by a common constant; 
    
    \item or all leaves of $\FF$ have exponential growth.
  
  \end{itemize}
Indeed, Carri\`ere, and Breuillard and Gelander stated these results for Riemannian foliations on compact manifolds, either by considering the residual set of leaves without holonomy, or by considering the holonomy covers of all leaves. This kind of extension is straightforward by Molino's theory \cite{Molino1988}.

In this paper, we show that the arguments of Breuillard and Gelander about the growth of Lie foliations can be extended to $(G,G/P)$-foliations assuming that $\overline\Gamma$ is connected. Let us remark that a $(G,G/P)$-foliation may not be Riemannian (it is Riemannian if $P$ is compact). Precisely, we prove the following complement of Blumenthal's observations about growth of transversely homogeneous foliations.

\begin{thm}\label{t:growth transv homog folns}
  Let $G$ be a Lie group, and $P\subset G$ a closed subgroup such that $G/P$ is connected and the $G$-action on $G/P$ is faithful. Let $\FF$ be a $(G,G/P)$-foliation on a compact connected manifold with holonomy group $\Gamma\subset G$. If $\overline\Gamma$ is connected, then:
    \begin{itemize}
    
      \item either all holonomy covers of the leaves of $\FF$ have polynomial growth with degree bounded by a common constant;
      
      \item or all holonomy covers of the leaves have exponential growth.
    
    \end{itemize}
\end{thm}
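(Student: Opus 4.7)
My plan is to follow the strategy of Breuillard and Gelander for Lie foliations, using connectedness of $H:=\overline\Gamma$ to reduce the growth problem for a $(G,G/P)$-foliation to a problem about orbits of the finitely generated dense subgroup $\Gamma\subset H$ on a homogeneous space of $H$.

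First I would fix the $\Gamma$-equivariant developing submersion $D\colon\widetilde M\to G/P$ supplied by Blumenthal's theorem. For $\widetilde x\in\widetilde M$ lifting a point $x$ in a leaf $L$, set $y=D(\widetilde x)$; then $\Gamma\cdot y$ sits inside the $H$-orbit $H\cdot y$, a connected submanifold of $G/P$ diffeomorphic to $H/H_y$ (with $H_y$ the $H$-stabilizer of $y$), and $\Gamma\cdot y$ is dense in $H\cdot y$ by density of $\Gamma$ in $H$.

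Next, by Plante's principle relating the growth of a leaf to that of its holonomy pseudogroup orbit, the growth type of the holonomy cover of $L$ equals, up to coarse equivalence, the growth of $\Gamma\cdot y$. Hence the theorem reduces to a uniform dichotomy in $y$: either every orbit $\Gamma\cdot y$ has polynomial growth of degree bounded by a single constant, or every such orbit has exponential growth.

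Finally I would apply Breuillard and Gelander's topological Tits alternative to the finitely generated subgroup $\Gamma\subset H$: either $\Gamma$ has polynomial growth with degree bounded only in terms of $H$, or $\Gamma$ contains a free subsemigroup on generators $a_1,a_2$ that can be chosen in an arbitrarily small neighborhood of $1\in H$ and arranged to avoid any prescribed proper closed subgroup. In the first case each orbit $\Gamma\cdot y\cong\Gamma/\Gamma_y$ has polynomial growth of no larger degree, giving the uniform bound. The main obstacle is the second case, where the freeness of the subsemigroup in $\Gamma$ could a priori be absorbed by $\Gamma_y=\Gamma\cap H_y$. I would resolve this by choosing the generators so close to $1$ that the orbit map $H\to H\cdot y$ is injective on a product of a small symmetric neighborhood with itself an arbitrary bounded number of times, and by arranging $a_1,a_2$ to avoid $H_y$; then distinct reduced words of length $\le n$ in $a_1,a_2$ land in distinct cosets of $H_y$, producing at least $2^n$ points of $\Gamma\cdot y$ and hence exponential growth.
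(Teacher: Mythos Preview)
Your reduction step misidentifies the object whose growth is at stake. The holonomy cover of a leaf is coarsely quasi-isometric to the \emph{germ cover} of the corresponding pseudogroup orbit, not to the orbit $\Gamma\cdot y$ itself. In the present setting (faithful, quasi-analytic action on a connected space) the germ cover based at $x$ is in bijection with the set $\Gamma_{U,x}=\{\gamma\in\Gamma:\gamma\cdot x\in U\}$, where $U$ is a relatively compact transversal, and the relevant metric $d_{S,U,x}$ counts only those generator-words whose intermediate products keep $x$ inside $U$. Two consequences: (i) the stabilizer $\Gamma_y$ is a red herring---distinct elements of $\Gamma$ already give distinct germs, so there is no quotient by $\Gamma_y$ to worry about; (ii) the real difficulty, which your proposal does not address, is that a free subsemigroup in $\Gamma$ need not produce exponentially many elements of $\Gamma_{U,x}$ for the constrained metric, because the branching may push points out of $U$.

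Your handling of the exponential case has a genuine gap even on its own terms. Choosing $a_1,a_2$ close to $1$ so that the orbit map $H\to H\cdot y$ is injective on a bounded product of a small neighborhood gives distinctness of cosets only for words of length up to some fixed $N$; for larger $n$ the products leave any prescribed neighborhood and nothing prevents collisions in $H/H_y$. You cannot let $a_1,a_2$ depend on $n$ and still speak of exponential growth. The paper circumvents both problems at once: it first shows (using recurrence and connectedness of $\overline\Gamma$) that one may enlarge the generating set $S=\{s_1,\dots,s_k\}$ so that every point of $\overline U$ lies in $s_i^{-1}V\cap s_j^{-1}V$ for some $i<j$, with $\overline V\subset U$; then it invokes Breuillard--Gelander to perturb the $s_i$ to nearby $t_i$ that freely generate a free semigroup. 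The branching argument now takes place entirely inside $U$ (each $\gamma\cdot x\in U$ has at least two successors $t_i\gamma\cdot x,t_j\gamma\cdot x\in V$), and freeness in $\Gamma$ gives distinctness in $\Gamma_{U,x}$ directly, with no stabilizer in the way.
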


As a particular case, $(\PSL(2;\R),\R P^1)$-foliations are called {\em transversely projective\/}. The first example of a codimension $1$ foliation on a compact $3$-manifold with nonzero Godbillon-Vey invariant, due to Roussarie, was transversely projective, and its holonomy group $\Gamma$ is discrete and uniform in $\PSL(2;\R)$ (see e.g.\ \cite[Example~1.3.14]{CandelConlon2000-I}). Chihi and ben~Ramdane \cite{ChihiRamdane2008} have shown that, for any transversely projective foliation on a compact manifold, if the Godbillon-Vey invariant is nonzero, then $\Gamma$ is either discrete or dense in $\PSL(2;\R)$. Moreover they constructed examples satisfying the second alternative on compact manifolds of dimension $\ge5$, and therefore satisfying the conditions of Theorem~\ref{t:growth transv homog folns}.

We thank the referee for suggestions to improve the paper.

%

\section{Preliminaries}\label{s:prelim}

\subsection{Coarse quasi-isometries and growth of metric spaces}\label{ss:growth metric}

A {\em net\/} in a metric space $M$, with metric $d$, is a subset $A\subset M$ that satisfies $d(x,A)\le C$ for some $C>0$ and all $x\in M$; the term {\em $C$-net\/} is also used.  A {\em coarse quasi-isometry\/} between $M$ and another metric space $M'$ is a bi-Lipschitz bijection between nets of $M$ and $M'$; in this case, $M$ and $M'$ are said to be {\em coarsely quasi-isometric\/} (in the sense of Gromov) \cite{Gromov1993}. If such a bi-Lipschitz bijection, as well as its inverse, has dilation $\le\lambda$, and it is defined between $C$-nets, then it will be said that the coarse quasi-isometry has {\em distortion\/} $(C,\lambda)$. A family of coarse quasi-isometries with a common distortion will be called {\em uniform\/}, and the corresponding metric spaces are called {\em uniformly\/} coarsely quasi-isometric. 

The version of growth for metric spaces given here is taken from \cite{AlvCandel:gcgol}. Since \cite{AlvCandel:gcgol} is not finished yet, some short proofs are included.

Recall that, given non-decreasing functions\footnote{Usually, growth types are defined by using non-decreasing functions $\Z^+\to[0,\infty)$, but this gives rise to an equivalent concept (see \cite{PhillipsSullivan1981}).} $u,v:[0,\infty)\to[0,\infty)$, it is said that $u$ is {\em dominated\/} by $v$, written $u\preccurlyeq v$, when there are $a,b,c,d>0$ such that $u(r)\le a\,v(br+c)+d$ for all $r$. If $u\preccurlyeq v\preccurlyeq u$, then it is said that $u$ and $v$ represent the same {\em growth type\/}; this is an equivalence relation and ``$\preccurlyeq$'' defines a partial order relation between growth types called {\em domination\/}. For a family of pairs of non-decreasing functions $[0,\infty)\to[0,\infty)$, {\em uniform\/} domination means that those pairs satisfy the above condition of domination with the same constants $a,b,c,d$. A family of non-decreasing  functions $[0,\infty)\to[0,\infty)$ will be said to have {\em uniformly\/} the same growth type if they uniformly dominate one another.

For a complete connected Riemannian manifold $L$, the growth type of each mapping $r\mapsto\vol B(x,r)$ is independent of $x$ and is called the {\em growth type\/} of $L$. Another definition of {\em growth type\/} can be similarly given for metric spaces whose bounded sets are finite, where the number of points is used instead of the volume.

Let $M$ be a metric space with metric $d$. A {\em quasi-lattice\/} $\Gamma$ of $M$ is a $C$-net of $M$ for some $C\ge0$ such that, for every $r\ge0$, there is some $K_r\ge0$ such that $\card(\Gamma\cap B(x,r))\le K_r$ for every $x\in M$. It is said that $M$ is of {\em coarse bounded geometry\/} if it has a quasi-lattice. In this case, the {\em growth type\/} of $M$ can be defined as the growth type of any quasi-lattice $\Gamma$ of $M$; i.e., it is the growth type of the {\em growth function\/} $r\mapsto v_\Gamma(x,r)=\card(B(x,r)\cap\Gamma)$ for any $x\in\Gamma$. This definition can be proved to be independent of $\Gamma$ as
follows. Let $\Gamma'$ be another quasi-lattice in $M$. So $\Gamma$ and $\Gamma'$ are $C$-nets in $M$ for some $C\ge0$, and there is some $K_r\ge0$ for each $r\ge0$ such that $\card(B(x,r)\cap\Gamma)\le K_r$ and $\card(B(x,r)\cap\Gamma')\le K_r$ for all $x\in M$. Fix points $x\in\Gamma$ and $x'\in\Gamma'$, and let  $\delta=d(x,x')$. Because $B(x,r)\subset B(x',r+\delta)$ and $\Gamma'$ is a $C$-net, it follows that
$$
B(x,r)\cap\Gamma\subset\bigcup_{y'\in B(x',r+\delta+C)\cap\Gamma'}B(y',C)\cap\Gamma'\;,
$$
yielding
$$
v_\Gamma(x,r)\le K_C\,v_{\Gamma'}(x',r+\delta+C)\le K_C\,v_{\Gamma'}(x',(1+\delta+C)r)
$$
for all $r\ge1$. Hence the growth type of $r\mapsto v_\Gamma(x,r)$ is dominated by the growth type of $r\mapsto v_{\Gamma'}(x',r)$. 

A family of metric spaces which satisfy the above condition of coarse bounded geometry with the same constants $C$ and $K_r$ is said to have {\em uniformly\/} coarse bounded geometry. If moreover the lattices involved in this condition have growth functions defining uniformly the same growth type, then these metric spaces are said to have {\em uniformly\/} the same growth type.

 The condition of coarse bounded geometry is satisfied by complete connected Riemannian manifolds of bounded geometry, and by discrete metric spaces with a uniform upper bound on the number of points in all balls of each given radius \cite{BlockWeinberger1997}. In those cases, the two given definitions of growth type are equal.

\begin{lem}[{\'Alvarez L\'opez-Candel \cite{AlvCandel2009}}]
\label{l:growth type}
  Two coarsely quasi-isometric metric spaces of coarse bounded
  geometry have the same growth type. Moreover, if a family of metric spaces of coarse bounded geometry is uniformly coarsely quasi-isometric, then it has uniformly the same growth type.
\end{lem}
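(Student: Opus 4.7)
The plan is to transport a quasi-lattice from $M$ to $M'$ through the coarse quasi-isometry and then invoke the independence of the growth type from the choice of quasi-lattice, already established in the excerpt.

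Let $f:A\to A'$ be the given bi-Lipschitz bijection of distortion $(C,\lambda)$ between $C$-nets $A\subset M$ and $A'\subset M'$, and let $\Gamma$ be a quasi-lattice in $M$ that is a $C_0$-net with ball bound $K_r$. I would first replace $\Gamma$ by a quasi-lattice $\Gamma_0\subset A$ contained in the domain of $f$: choose some $\phi(x)\in A$ with $d(x,\phi(x))\le C$ for each $x\in\Gamma$, and set $\Gamma_0=\phi(\Gamma)$. The net property is preserved with constant $C_0+C$; the inclusion $\Gamma_0\cap B(y,r)\subset\phi(\Gamma\cap B(y,r+C))$ yields the bound $K_{r+C}$; and the fact that $\phi$ has fibers of cardinality at most $K_{2C}$ (any two preimages of a point lie within $2C$ of each other) ensures that $\Gamma_0$ is genuinely a quasi-lattice, not just a net.

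Next I would push $\Gamma_0$ through $f$: set $\Gamma'_0=f(\Gamma_0)\subset M'$. To see that $\Gamma'_0$ is a net, given $z\in M'$ pick $y'\in A'$ within $C$ of $z$, take $y=f^{-1}(y')\in A$, find $x\in\Gamma$ within $C_0$ of $y$, and observe that $f(\phi(x))\in\Gamma'_0$ sits within $\lambda(C+C_0)+C$ of $z$. For the ball bound, if $\Gamma'_0\cap B(z,r)\neq\emptyset$, fix some $f(y_0)$ in it; then every other element of $\Gamma'_0\cap B(z,r)$ is the image of a point of $\Gamma_0\cap B(y_0,2\lambda r)$, of which there are at most $K_{2\lambda r+C}$. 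The bi-Lipschitz property then gives, for any $y_0\in\Gamma_0$,
\[
v_{\Gamma'_0}(f(y_0),r)\le v_{\Gamma_0}(y_0,\lambda r),\qquad v_{\Gamma_0}(y_0,r)\le v_{\Gamma'_0}(f(y_0),\lambda r),
\]
so $\Gamma_0$ and $\Gamma'_0$ represent the same growth type. Combined with the independence statement in the excerpt, this yields the first assertion.

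For the uniform statement, I would repeat the construction for a family of coarse quasi-isometries of common distortion $(C,\lambda)$ and quasi-lattices of common constants $(C_0,K_r)$; each estimate above depends only on these constants, so the resulting dominations are uniform in the family. The main obstacle I expect is bookkeeping rather than any deep idea: three metric spaces ($M$, the net $A$, and $M'$) and two distinct kinds of ``net constant'' (the quasi-lattice constant $C_0$ and the distortion constant $C$) must be tracked simultaneously, and one must be careful to verify the quasi-lattice property of $\Gamma'_0$ with constants depending only on the ambient data and not on the particular member of the family.
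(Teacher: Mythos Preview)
Your argument is correct and is essentially the paper's own proof written out with explicit constants: the paper simply asserts that the net $A$ inherits coarse bounded geometry, chooses a quasi-lattice $\Gamma\subset A$ directly, and pushes it through the bi-Lipschitz bijection to a quasi-lattice of $M'$, while your projection step $\Gamma\mapsto\Gamma_0\subset A$ is precisely what justifies that one-line assertion. One minor remark: the fiber-cardinality bound on $\phi$ is not needed to verify that $\Gamma_0$ is a quasi-lattice, since the inclusion $\Gamma_0\cap B(y,r)\subset\phi(\Gamma\cap B(y,r+C))$ already yields the ball bound $K_{r+C}$; it would only be relevant if you compared $v_\Gamma$ and $v_{\Gamma_0}$ directly rather than via the independence statement.
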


\begin{proof}
  Let $\phi:A\to A'$ be a coarse quasi-isometry between metric spaces $M$ and $M'$ of coarse bounded geometry. Then $A$ is of coarse bounded geometry too, and thus it has some lattice $\Gamma$, which is also a lattice in $M$ because $A$ is a net. Since $\phi$ is a bi-lipschitz bijection, it easily follows that $\Gamma$ and $\phi(\Gamma)$ have the same growth type, and that $\phi(\Gamma)$ is a lattice in $A'$, and thus in $M'$ too because $A'$ is a net. This argument has an obvious uniform version for a family of metric spaces.
\end{proof}

\subsection{Pseudogroups}

A {\em pseudogroup\/} ({\em of local transformations\/}) on a topological space $T$ is a collection $\HH$ of homeomorphisms between open subsets of $T$ that contains the identity map and is closed under composition (wherever defined), inversion, restriction and combination (union) of maps. Such a pseudogroup $\HH$ is {\em generated\/} by a set $E\subset\HH$ if every element of $\HH$ can be obtained from $E$ by using the above pseudogroup operations; usually, $E$ will be symmetric ($h^{-1}\in E$ if $h\in E$). The {\em orbit\/} of a point $x\in T$ is the set $\HH(x)=\{\,h(x)\mid h\in\HH,\ x\in\dom h\,\}$. Many other basic dynamical concepts can be generalized to pseudogroups because they are natural generalizations of dynamical systems (each group action on a topological space generates a pseudogroup). The {\em restriction\/} of $\HH$ to an open subset $U\subset T$ is  the pseudogroup $\HH|_U=\{\,h\in\HH\mid\dom h\cup\im h\subset U\,\}$. It is said that $\HH$ is {\em quasi-analytic\/} when any $h\in\HH$ is the identity around any $x\in\dom h$ if $h$ is the identity on some open set whose closure contains $x$;  {\em quasi-analytic\/} group actions are similarly defined, which means that they generate quasi-analytic pseudogroups.  Pseudogroups that are equivalent in the following sense should be considered to have the same dynamics.

\begin{defn}[{Haefliger \cite{Haefliger1985,Haefliger1988}}]
  Let $\HH$ and $\HH'$ be pseudogroups on respective topological spaces $T$ and $T'$. An {\em equivalence\/} $\Phi:\HH\to\HH'$ is a maximal collection $\Phi$ of homeomorphisms of open subsets of $T$ to open subsets of $T'$ such that:
    \begin{itemize}

      \item If $\phi\in\Phi$, $h\in\HH$ and $h'\in\HH'$, then $h'\phi h\in\Phi$; and

      \item $\HH$ and $\HH'$ are generated by the maps of the form $\psi^{-1}\phi$ and $\psi\phi^{-1}$, respectively, with $\phi,\psi\in\Phi$.

    \end{itemize}
  In this case, $\HH$ and $\HH'$ are said to be {\em equivalent\/}.
\end{defn}
 
If $\Phi:\HH\to\HH'$ is an equivalence, then $\Phi^{-1}=\{\phi^{-1}\ |\ \phi\in\Phi\}$ is an equivalence $\HH'\to\HH$, called the {\em inverse\/} of $\Phi$. An equivalence $\Phi:\HH\to\HH'$ is {\em generated\/} by a subset $\Phi_0\subset\Phi$ if all of the elements of $\Phi$ can be obtained by restriction and combination of composites $h'\phi h$ with $h\in\HH$, $\phi\in\Phi_0$ and $h'\in\HH'$. If $\Phi:\HH\to\HH'$ and $\Psi:\HH'\to\HH''$ are equivalences, then the maps $\psi\phi$, for $\phi\in\Phi$ and $\psi\in\Psi$, generate an equivalence $\Psi\Phi:\HH\to\HH''$, called the {\em composite\/} of $\Phi$ and $\Psi$. Thus the equivalence of pseudogroups is an equivalence relation. An equivalence $\Phi:\HH\to\HH'$ induces a homeomorphism between the corresponding orbit spaces, $\bar\Phi:T/\HH\to T'/\HH'$.

A basic example of a pseudogroup equivalence is the following. Let $\HH$ be a pseudogroup of local transformations of a space $T$, let $U\subset T$ be an open subset that meets every $\HH$-orbit. Then the inclusion map $U\hookrightarrow T$ generates an equivalence
$\HH|_U\to\HH$. In fact, this example can be used to describe any pseudogroup
equivalence in the following way. Let $\HH$ and $\HH'$ be pseudogroups
of local transformations of respective spaces $T$ and $T'$, and let $\Phi:\HH\to\HH'$ be an
equivalence. Let $\HH''$ be the pseudogroup of local transformations of $T''=T\sqcup T'$ generated
by $\HH\cup\HH'\cup\Phi$. Then the inclusions of $T$ and $T'$ in $T''$ generate
equivalences $\Psi_1:\HH\to\HH''$ and $\Psi_2:\HH'\to\HH''$ so that
$\Phi=\Psi_2^{-1}\Psi_1$.

For a pseudogroup $\HH$ on a locally compact space $T$, the orbit space $T/\HH$ is compact if and only if there exists a relatively compact open subset of $T$ that meets every $\HH$-orbit. The following is a stronger ``compactness'' condition on a pseudogroup.

\begin{defn}[{Haefliger \cite{Haefliger1985}}]\label{d:compactly generated}
Let $\HH$ be a pseudogroup on a locally compact space $T$.  It is said that $\HH$ is {\em compactly generated\/} if there is a relatively compact open set $U$ in $T$ meeting each $\HH$-orbit, and such that $\HH|_U$ is generated by a finite symmetric collection $E$ so that each $g\in E$ has an extension $\bar g\in\HH$ with $\overline{\dom g}\subset\dom\bar g$.
\end{defn}

It was observed in \cite{Haefliger1985} that the property of being compactly generated depends only on the equivalence class of the pseudogroup, and that the relatively compact open set $U$ meeting each orbit can be chosen arbitrarily. If $E$ satisfies the conditions of Definition~\ref{d:compactly generated}, it will be called a {\em system of compact generation\/} of $\HH$ on $U$. 

For $h\in\HH$ and $x\in\dom h$, let $\germ(h,x)$ denote the germ of $h$ at $x$. Then
  \[
    \fH=\{\,\germ(h,x)\mid h\in\HH,\ x\in\dom h\,\}
  \]
becomes a topological groupoid, with object space $T$, equipped with the \'etale topology, the operation induced by composition, and the source and target projections to $T$. For $x,y\in T$, let $\fH_x$ (respectively, $\fH^y$) denote the set of elements in $\fH$ with source $x$ (respectively, with target $y$), and let $\fH_x^y=\fH_x\cap\fH^y$; in particular, the group $\fH_x^x$ will be called the {\em germ group\/} of $\HH$ at $x$. Points in the same $\HH$-orbit have isomorphic germ groups (if $y\in\HH(x)$, an isomorphism $\fH_y^y\to\fH_x^x$ is given by conjugation with any element in $\fH_x^y$); hence the germ groups of the orbits make sense up to isomorphism. Under pseudogroup equivalences, corresponding orbits have isomorphic germ groups. The set $\fH_x$ will be called the {\em germ cover\/} of the orbit $\HH(x)$ with base point $x$. The target map restricts to a surjective map $\fH_x\to\HH(x)$ whose fibers are bijective to $\fH_x^x$ (if $y\in\HH(x)$, a bijection $\fH_x^x\to\fH_x^y$ is given by left product with any element in $\fH_x^y$); thus $\fH_x$ is finite if and only if both $\fH_x^x$ and $\HH(x)$ are finite. Moreover germ covers based on points in the same orbit have the same cardinality (if $y\in\HH(x)$, a bijection $\fH_y\to\fH_x$ is given by right product with any element in $\fH_x^y$); therefore the germ covers of the orbits make sense up to bijections.

\subsection{Quasi-isometry type of orbits}\label{ss:q-i}

Let $\HH$ be a pseudogroup on a space $T$, and $E$ a symmetric set of generators of $\HH$. For each $h\in\HH$ and $x\in\dom h$, let $|h|_{E,x}$ be the length of the shortest expression of $\germ(h,x)$ as a product of germs of maps in $E$ (being $0$ if $\germ(h,x)=\germ(\id_T,x)$). For each $x\in T$, define metrics $d_E$ on $\HH(x)$ and $\fH_x$ by
\begin{gather*} 
  d_E(y,z)=\min\{\,|h|_{E,y}\mid h\in\HH,\ y\in\dom h,\ h(y)=z\,\}\;,\\ 
  d_E(\germ(f,x),\germ(g,x))=|fg^{-1}|_{E,g(x)}\;.
\end{gather*}  
Notice that
  \[
    d_E(f(x),g(x))\le d_E(\germ(f,x),\germ(g,x))\;.
  \]
Moreover, on the germ covers, $d_E$ is right invariant in the sense that, if $y\in\HH(x)$, the bijection $\fH_y\to\fH_x$, given by right multiplication with any element in $\fH_x^y$, is isometric; so the isometry types of the germ covers of the orbits make sense without any reference to base points. In fact, the definition of $d_E$ on $\fH_x$ is analogous to the right invariant metric $d_S$ on a group $\Gamma$ defined by a symmetric system of generators $S$: $d_S(\gamma,\delta)=|\gamma\delta^{-1}|$ for $\gamma,\delta\in\Gamma$, where $|\gamma|$ is the length of the shortest expression of $\gamma$ as product of elements of $S$ (being $0$ if $\gamma=1$). 

Assume that $\HH$ is compactly generated and $T$ locally compact. Let $U\subset T$ be a relatively compact open subset that meets all $\HH$-orbits, let $\GG=\HH|_U$, and let $E$ be a symmetric system of compact generation of $\HH$ on $U$. With these conditions, the quasi-isometry type of the $\GG$-orbits with $d_E$ may depend on $E$ \cite[Section~6]{AlvCandel2009}. So the following additional condition on $E$ is considered.

\begin{defn}[{\'Alvarez L\'opez-Candel \cite[Definition~4.2]{AlvCandel2009}}]\label{d:recurrent finite symmetric family of generators} With the above notation, it is said that $E$ is {\em recurrent\/} if, for any relatively compact open subset $V\subset U$ that meets all $\GG$-orbits, there exists some $R>0$ such that $\GG(x)\cap V$ is an $R$-net in $\GG(x)$ with $d_E$ for all $x\in U$.
\end{defn}

Actually, if some relatively compact open subset $V$ of $U$ which meets each orbit satisfies the above condition, then it is satisfied by all such subsets of $U$ \cite[Lemma~4.3]{AlvCandel2009}. Furthermore there always exists a recurrent system of compact generation on $U$ \cite[Corollary~4.5]{AlvCandel2009}.

In other words, the recurrence of $E$ means that there is some $N\in\N$ such that 
     \begin{equation}\label{E^N}
       U=\bigcup_{h\in E^N}h^{-1}(V\cap\im h)\;,
     \end{equation}
where $E^N$ is the family of compositions of at most $N$ elements of $E$.

\begin{thm}[{\'Alvarez L\'opez-Candel \cite[Theorem~4.6]{AlvCandel2009}}]
\label{t:quasi-isometric orbits}
Let $\HH$ and $\HH'$ be compactly generated pseudogroups on locally compact spaces $T$ and $T'$, let $U$ and $U'$ be relatively compact open subsets of $T$ and $T'$ that meet all orbits of $\HH$ and $\HH'$, let $\GG$ and $\GG'$ denote the restrictions of $\HH$ and $\HH'$ to $U$ and $U'$, and let $E$ and $E'$ be recurrent symmetric systems of compact generation of $\HH$ and $\HH'$ on $U$ and $U'$, respectively. Suppose that there exists an equivalence $\HH\to\HH'$, and consider the induced equivalence $\GG\to\GG'$ and homeomorphism $U/\GG\to U'/\GG'$. Then the $\GG$-orbits with $d_E$ are uniformly quasi-isometric to the corresponding $\GG'$-orbits with $d_{E'}$.
\end{thm}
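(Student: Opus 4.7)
The plan is to compare $(\GG(x), d_E)$ and $(\GG'(x'), d_{E'})$ by interposing a single enlarged pseudogroup on $T \sqcup T'$ whose orbits contain corresponding orbits of $\GG$ and $\GG'$ as uniform nets, and then to verify that, for this enlargement, the natural inclusions are uniform coarse quasi-isometries.

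First I would select, using compactness of $\overline U$ and the fact that $\Phi$ is closed under composition with elements of $\HH$ and $\HH'$ and under restriction, a finite symmetric family $\Phi_0 \subset \Phi \cup \Phi^{-1}$ such that $\overline U \subset \bigcup_{\phi \in \Phi_0} \dom \phi$, $\overline{U'} \subset \bigcup_{\phi \in \Phi_0} \im \phi$, and each $\phi \in \Phi_0$ admits an extension $\bar \phi$ in $\Phi$ with $\overline{\dom \phi} \subset \dom \bar \phi$ (this last point is free, since elements of $\Phi$ can always be restricted). Setting $T'' = T \sqcup T'$, $U'' = U \sqcup U'$, letting $\HH''$ be the pseudogroup on $T''$ generated by $\HH \cup \HH' \cup \Phi_0$, $\GG'' = \HH''|_{U''}$, and $E'' = E \cup E' \cup \Phi_0$, a direct verification shows that $E''$ is a recurrent system of compact generation of $\HH''$ on $U''$, and that every $\GG''$-orbit through a point $x \in U$ is the disjoint union $\GG(x) \sqcup \GG'(x')$, where $x' = \phi(x)$ for some $\phi \in \Phi_0$ is the corresponding point under the induced equivalence.

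The heart of the argument is showing that the inclusion $\GG(x) \hookrightarrow \GG''(x)$ is a uniform coarse quasi-isometry from $(\GG(x), d_E)$ to $(\GG''(x), d_{E''})$; by symmetry the analogous statement holds for $\GG'(x')$, and combining the two (via the $\Phi_0$-bridges) yields the desired quasi-isometry. One direction, $d_{E''} \leq d_E$ on $\GG(x)$, is immediate from $E \subset E''$. For the reverse direction, I would take any $E''$-expression representing a germ with source and target in $U$, and decompose it into alternating $E$-segments and \emph{excursions} of the form $\phi_0, h'_1, \ldots, h'_k, \phi_k^{-1}$ with $\phi_i \in \Phi_0$ and $h'_i \in E'$. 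Each excursion can be rewritten by choosing intermediate $\phi_i \in \Phi_0$ with $h'_i \cdots h'_1 \phi_0(y) \in \im \phi_i$, giving a composition $\phi_k^{-1} h'_k \phi_{k-1} \cdot \phi_{k-1}^{-1} h'_{k-1} \phi_{k-2} \cdots \phi_1^{-1} h'_1 \phi_0$ of $k$ factors, each of which is a fixed element of $\HH$ belonging to a finite family indexed by the triples $(\phi_{i-1}, h'_i, \phi_i)$. Assuming a uniform bound $C$ on the $E$-word length of these finitely many elements, this produces an $E$-expression of length at most $C\cdot k$ in $\GG(x)$, yielding the co-Lipschitz inequality $d_E \leq C \cdot d_{E''}$ on $\GG(x)$, uniformly in $x$. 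That $\GG(x)$ is a net in $\GG''(x)$ with $d_{E''}$ follows at once from the single-step bounds provided by $\Phi_0$.

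The main obstacle I anticipate is precisely this uniform bound on the $E$-word length of the fixed finite list of elements $\phi^{-1} h' \phi' \in \HH$: a priori, $x \mapsto |\phi^{-1} h' \phi'|_{E,x}$ is only upper semicontinuous on an open domain, and so need not be bounded there. To obtain a genuine global bound one must exploit the extension property built into the definition of compact generation: since $\phi$, $\phi'$ and the extension $\bar h'$ of $h'$ are all defined on closures of domains, the element $\phi^{-1} h' \phi'$ extends likewise, and upper semicontinuity of the word length on a compact superset of its closed domain then yields a finite bound. Taking the maximum over the finitely many triples $(\phi, h', \phi')$ (and their inverses, for the symmetric argument) supplies the uniform constants in the conclusion.
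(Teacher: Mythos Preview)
The paper does not actually prove this theorem; it is quoted from \cite{AlvCandel2009} and used as a black box, so there is no in-paper proof to compare against. Your overall strategy---passing to the pseudogroup $\HH''$ on $T''=T\sqcup T'$ generated by $\HH\cup\HH'\cup\Phi$, which is precisely the construction the paper recalls in the paragraph preceding Definition~\ref{d:compactly generated}---is the natural one and matches the approach of the cited source.

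One technical point in your last paragraph deserves more care. You want a uniform bound on $|\phi^{-1}h'\phi'|_{E,x}$ via upper semicontinuity of the $E$-word length on a compact set containing $\overline{\dom(\phi^{-1}h'\phi')}$. But the $E$-word length is only defined at points of $U$, and nothing in your setup forces $\overline{\dom(\phi^{-1}h'\phi')}\subset U$: you chose $\Phi_0$ so that the domains \emph{cover} $\overline U$, not so that they lie inside $U$, and the extension $\bar\phi^{-1}\bar h'\bar\phi'$ may well leave $U$. The standard fix is to select $\Phi_0$ from the \emph{induced} equivalence $\GG\to\GG'$ so that each $\phi\in\Phi_0$ satisfies $\overline{\dom\phi}\subset U$ and $\overline{\im\phi}\subset U'$ (possible by restriction and compactness), and to use recurrence of $E'$ to arrange that the intermediate points of any excursion in $U'$ can be taken in a fixed relatively compact $V'$ with $\overline{V'}\subset U'$ covered by $\bigcup_{\phi\in\Phi_0}\im\phi$. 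Then each $\phi_i^{-1}h'_i\phi_{i-1}$ is an element of $\GG$ whose domain has compact closure in $U$, the extensions stay inside $U$ on that closure, and upper semicontinuity on a compact subset of $U$ gives the bound you need. With this adjustment your argument goes through.
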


An obvious modification of the arguments of the proof of \cite[Theorem~4.6]{AlvCandel2009} gives the following.

\begin{thm}\label{t:quasi-isometric germ covers of orbits}
  With the notation and conditions of Theorem~\ref{t:quasi-isometric orbits}, the germ covers of the $\GG$-orbits with $d_E$ are uniformly quasi-isometric to the germ covers of the corresponding $\GG'$-orbits with $d_{E'}$.
\end{thm}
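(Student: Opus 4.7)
The plan is to mimic the proof of Theorem~\ref{t:quasi-isometric orbits} (Theorem~4.6 of~\cite{AlvCandel2009}), but carry out every step at the level of germs rather than orbit points. This adaptation is natural because the word metric $d_E$ on the germ cover $\fG_x$ is right-invariant and formally analogous to the word metric on a group: the bijections $\fG_y\to\fG_x$ given by right multiplication with an element of $\fG_x^y$ are already isometric, so the isometry type of $\fG_x$ depends only on the orbit, not the base point. The equivalence $\HH\to\HH'$ will be turned into an explicit bi-Lipschitz correspondence between $\fG_x$ and $\fG'_{x'}$ for appropriately chosen base points.

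After possibly enlarging $E$ and $E'$, fix a finite symmetric set $\Phi_0\subset\Phi$ whose domains cover $U$, whose images cover $U'$, and such that: for every $h\in E$ and every $y\in\dom h$ there exist $\phi,\phi'\in\Phi_0$ with $y\in\dom\phi$, $h(y)\in\dom\phi'$, and $|\phi' h\phi^{-1}|_{E',\phi(y)}\le K$; and symmetrically for $E'$ (with constant $K$ independent of $h$ and $y$). Such $\Phi_0$ exists by compact generation and recurrence, as in the proof of Theorem~2.3. Pick selection maps $\phi_\cdot:U\to\Phi_0$ and $\sigma:U\to\Phi_0$ with $x\in\dom\phi_x$ and $y\in\dom\sigma(y)$ for all $x,y\in U$, and set $x'=\phi_x(x)$. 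Then define
\[
F_x:\fG_x\to\fG'_{x'},\qquad \germ(g,x)\longmapsto\germ\bigl(\sigma(g(x))\,g\,\phi_x^{-1},\,x'\bigr),
\]
which is well defined because compositions of this form generate $\GG'$. An inverse correspondence $F'_{x'}:\fG'_{x'}\to\fG_x$ is constructed symmetrically using $\Phi_0^{-1}$.

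To check the bi-Lipschitz bound, take $\germ(g_1,x),\germ(g_2,x)\in\fG_x$ and a shortest factorization $g_1g_2^{-1}=h_n\cdots h_1$ (as germs) with $h_i\in E$, defining intermediate points $y_0=g_2(x),y_1,\dots,y_n=g_1(x)$. Compute
\[
F_x(\germ(g_1,x))\,F_x(\germ(g_2,x))^{-1}=\sigma(y_n)\,h_n\cdots h_1\,\sigma(y_0)^{-1},
\]
and insert pairs $\sigma(y_i)^{-1}\sigma(y_i)$ between consecutive $h_i$'s. Each resulting factor $\sigma(y_i)\,h_i\,\sigma(y_{i-1})^{-1}$ lies in $\GG'$ and has $d_{E'}$-length at most $K$ by the choice of $\Phi_0$; the end pieces contribute a bounded additive constant because $\phi_x$ and $\sigma$ take finitely many values. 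Together with the analogous estimate for $F'_{x'}$ and the observation that $F'_{x'}\circ F_x$ differs from $\id_{\fG_x}$ by a germ of bounded $d_E$-length, this shows $F_x$ is a coarse quasi-isometry whose image is a net in $\fG'_{x'}$, with distortion depending only on $E$, $E'$, $K$, and $\Phi_0$; uniformity over $x\in U$ is automatic. The main obstacle, and the only place where this goes beyond Theorem~\ref{t:quasi-isometric orbits}, is ensuring that this conjugation-by-$\Phi_0$ procedure produces a bounded-length word at the germ level and not just the right target point; this is exactly where the uniformity clause on $\Phi_0$ is needed, and it is precisely what the proof of Theorem~4.6 in \cite{AlvCandel2009} already establishes as a byproduct.
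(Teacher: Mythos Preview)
Your proposal is correct and takes essentially the same approach as the paper: the paper's own proof consists of the single sentence ``An obvious modification of the arguments of the proof of \cite[Theorem~4.6]{AlvCandel2009} gives the following,'' and what you have written is precisely such a modification, carried out at the germ level by conjugating with a finite family $\Phi_0\subset\Phi$ and using right-invariance of $d_E$ on $\fG_x$. You have in fact supplied more detail than the paper does.
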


\begin{cor}\label{c:growth orbits and their germ covers}
  With the notation and conditions of Theorems~\ref{t:quasi-isometric orbits} and~\ref{t:quasi-isometric germ covers of orbits}, the corresponding orbits of $\GG$ and $\GG'$, as well as their germ covers, have the same growth type, uniformly.
\end{cor}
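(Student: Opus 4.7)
The plan is to assemble the corollary from three ingredients already present above: the uniform (coarse) quasi-isometries between corresponding orbits (Theorem~\ref{t:quasi-isometric orbits}), the uniform (coarse) quasi-isometries between corresponding germ covers (Theorem~\ref{t:quasi-isometric germ covers of orbits}), and the second assertion of Lemma~\ref{l:growth type}, which turns uniform coarse quasi-isometry into uniform equality of growth type provided the metric spaces involved have uniformly coarse bounded geometry. Hence the only real task is to verify that the orbits and germ covers, equipped with the word metrics $d_E$ and $d_{E'}$, satisfy this coarse bounded geometry hypothesis uniformly.

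That verification is immediate from the finiteness of the systems of compact generation $E$ and $E'$. Fix a germ cover $\fG_x$ with metric $d_E$, and consider the ball of radius $r\ge0$ around the identity germ at $x$: by the definition of $d_E$, any such germ is a product of at most $\lfloor r\rfloor$ germs of elements of $E$ (based at the appropriate points along the orbit), so the ball has at most $N_r=\sum_{k=0}^{\lfloor r\rfloor}|E|^k$ elements, a quantity depending only on $|E|$ and $r$. The right-invariance of $d_E$ on $\fG_x$ transports this bound to a ball around any base point, and the surjective, $d_E$-non-expanding target map $\fG_x\to\GG(x)$ transfers it further to every ball in the orbit $\GG(x)$. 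Replacing $|E|$ by $\max\{|E|,|E'|\}$ in the definition of $N_r$ accommodates the $\GG'$-side simultaneously. Since each orbit and each germ cover is discrete and serves as a $0$-net in itself, this gives uniformly coarse bounded geometry across the whole family.

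With the bounded geometry in hand, the uniform coarse quasi-isometries provided by Theorems~\ref{t:quasi-isometric orbits} and~\ref{t:quasi-isometric germ covers of orbits} feed directly into the uniform half of Lemma~\ref{l:growth type}, yielding that the corresponding orbits of $\GG$ and $\GG'$, and likewise their germ covers, have uniformly the same growth type. I do not foresee any genuine obstacle: the corollary is a packaging of the previously established results rather than an independent argument, and the only nuisance would be if the ambient definition of ``quasi-isometry'' used in the two theorems did not match the notion of ``coarse quasi-isometry'' governing Lemma~\ref{l:growth type}, which the terminology of Section~\ref{ss:growth metric} rules out.
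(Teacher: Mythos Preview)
Your argument is correct and follows the same route as the paper, which simply cites Lemma~\ref{l:growth type} together with Theorems~\ref{t:quasi-isometric orbits} and~\ref{t:quasi-isometric germ covers of orbits}. Your explicit verification of uniform coarse bounded geometry via the finiteness of $E$ and $E'$ merely spells out what the paper leaves implicit.
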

  
\begin{proof}
  This follows from Lemma~\ref{l:growth type} and Theorems~\ref{t:quasi-isometric orbits} and~\ref{t:quasi-isometric germ covers of orbits}.
\end{proof}

\subsection{Growth of leaves}\label{ss:growth leaves}

Let us recall some basic concepts of foliation theory (see e.g.\ \cite{HectorHirsch1981-A,HectorHirsch1983-B,Godbillon1991,CandelConlon2000-I,Walczak2004}). Let $\FF$ be a smooth foliation on a manifold $M$ given by a defining cocycle $(U_i,p_i,h_{ij})$ \cite{Haefliger1985,Haefliger1988}, where $p_i:U_i\to T_i$, and $h_{ij}:p_i(U_i\cap U_j)\to p_j(U_i\cap U_j)$ is determined by the condition $p_j=h_{ij}p_i$ on $U_i\cap U_j$. We can assume that $(U_i,p_i,h_{ij})$ is induced by a regular foliation atlas: the sets $U_i$ are the domains of the foliation charts, the maps $p_i$ are the local projections whose fibers are the plaques, and the maps $h_{ij}$ are the transverse components of the changes of coordinates. The equivalence class of the pseudogroup $\HH$ on $T=\bigsqcup T_i$ generated by the maps $h_{ij}$ is independent of the choice of defining cocycle, and is called the {\em holonomy pseudogroup\/} of $\FF$. There is a canonical identity between the space of leaves and the space of $\HH$-orbits, $M/\FF\equiv T/\HH$. The {\em holonomy group\/} of each leaf $L$ is defined as the germ group of the corresponding orbit. It can be considered as a quotient of the fundamental group of $L$ by taking ``chains'' of sets $U_i$ along loops in $L$, and the corresponding covering space is called the {\em holonomy cover\/} $\widetilde{L}$ of $L$. If $\FF$ admits a countable defining cocycle, then the leaves in some saturated residual subset of $M$ have trivial holonomy groups  \cite[Lemme~1]{Hector1977}, \cite{EpsteinMillettTischler1977}, and therefore they can be identified with their holonomy covers.

Suppose that $M$ is compact. Then, given any Riemannian metric $g$ on $M$, for each leaf $L$, the differentiable (and coarse) quasi-isometry types of $g|_L$ and its lift to $\widetilde{L}$ are independent of the choice of $g$; they depend only on $\FF$ and $L$. On the other hand, $\HH$ is compactly generated \cite{Haefliger1985}, which can be seen as follows. There is some defining cocycle $(U'_i,p'_i,h'_{ij})$, with $p'_i:U'_i\to T'_i$, such that $\overline{U_i}\subset U'_i$, $\overline{T_i}\subset T'_i$ and $p'_i$ extends $p_i$. Therefore each $h'_{ij}$ is an extension of $h_{ij}$ so that $\overline{\dom h_{ij}}\subset\dom h'_{ij}$. Moreover $\HH$ is the restriction to $T$ of the pseudogroup $\HH'$ on $T'=\bigsqcup_iT'_i$ generated by the maps $h'_{ij}$, and $T$ is a relatively compact open subset of $T'$ that meets all $\HH'$-orbits.

The collection $E$ of maps $h_{ij}$ is a recurrent system of compact generation of $\HH'$ on $T$ \cite[Lemma~5.4]{AlvCandel:gcgol}. According to Theorems~\ref{t:quasi-isometric orbits} and~\ref{t:quasi-isometric germ covers of orbits}, it follows that the quasi-isometry type of the $\HH$-orbits and their germ covers with $d_E$ are independent of the choice of $(U_i,p_i,h_{ij})$ under the above conditions; in fact, they are coarsely quasi-isometric to the corresponding leaves, and therefore they have the same growth type \cite{Carriere1988} (this is an easy consequence of the existence of a uniform lower bound and upper bound of the diameter and volume of the plaques). Similarly, the germ covers of the $\HH$-orbits are also quasi-isometric to the holonomy covers of the corresponding leaves.

\section{Growth of homogeneous pseudogroups}\label{s:growth homog pseudogroups}

Let $G$ be a Lie group, and $P\subset G$ a closed subgroup such that the $G$-action on $G/P$ is faithful and that $G/P$ is connected; thus this action is also quasi-analytic by the analyticity of $G/P$. Let us define a {\em {\rm(}homogeneous{\rm)} $(G,G/P)$-pseudogroup\/} as a pseudogroup equivalent to the pseudogroup $\HH$ generated by the action of some subgroup $\Gamma\subset G$ on some $\Gamma$-invariant open subset $T\subset G/P$. Suppose that $\HH$ is compactly generated, and let $\GG=\HH|_U$ for some relatively compact open subset $U\subset T$ that meets all $\HH$-orbits. For every $\gamma\in\Gamma$ with $\gamma\cdot U\cap U\ne\emptyset$, let $h_\gamma$ denote the restriction $U\cap\gamma^{-1}\cdot U\to\gamma\cdot U\cap U$ of the left translation $\gamma\cdot:G\to G$. There is a finite symmetric set $S=\{s_1,\dots,s_k\}\subset\Gamma$ such that $E=\{h_{s_1},\dots,h_{s_k}\}$ is a recurrent system of compact generation of $\HH$ on $U$; in fact, by reducing $\Gamma$ if necessary, we can assume that $S$ generates $\Gamma$. For each $x\in U$, let
  \[
    \Gamma_{U,x}=\{\,\gamma\in\Gamma\mid \gamma\cdot x\in U\,\}\;.
  \]
Let $\fG$ denote the topological groupoid of germs of $\GG$. Since the $G$-action on $G/P$ is faithful and quasi-analytic, and $G/P$ is connected, we get a bijection $\Gamma_{U,x}\to\fG_x$, $\gamma\mapsto\germ(h_\gamma,x)$. For $\gamma\in\Gamma_{U,x}$, let $|\gamma|_{S,U,x}:=|h_\gamma|_{E,x}$. Thus $|1|_{S,U,x}=0$, and, if $\gamma\ne1$, then $|\gamma|_{S,U,x}$ equals the minimum $n\in\N$ such that there are $i_1,\dots,i_n\in\{1,\dots,k\}$ with  $\gamma=s_{i_n}\cdots s_{i_1}$ and $s_{i_m}\cdots s_{i_1}\cdot x\in U$ for all $m\in\{1,\dots,n\}$. Moreover $d_E$ on $\fG_x$ corresponds to the metric $d_{S,U,x}$ on $\Gamma_{U,x}$ given by
  \[
    d_{S,U,x}(\gamma,\delta)=|\delta\gamma^{-1}|_{S,U,\gamma(x)}\;.
  \]

\begin{thm}\label{t:growth homog pseudogroup}
  With the above notation and conditions, if $\overline\Gamma$ is connected, then
    \begin{itemize}
    
      \item either all germ covers of the $\GG$-orbits have polynomial growth with degree bounded by some common constant;
      
      \item or all infinite germ covers of the $\GG$-orbits have exponential growth.
    
    \end{itemize}
\end{thm}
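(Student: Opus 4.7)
The plan is to apply Breuillard--Gelander's topological Tits alternative to the finitely generated subgroup $\Gamma\subset G$ whose closure $\overline\Gamma$ is a connected Lie group, and then transfer the ensuing dichotomy for $(\Gamma,d_S)$ to each germ cover $(\Gamma_{U,x},d_{S,U,x})$. Their theorem gives two mutually exclusive options: either $\Gamma$ is virtually nilpotent of nilpotency class bounded by a constant $c=c(\dim G)$ (so by Gromov it has polynomial growth of degree $\leq d=d(\dim G)$); or $\Gamma$ has exponential growth, and moreover in any prescribed neighborhood $\Omega$ of $1\in G$ one can find a pair $a,b\in\Omega\cap\Gamma$ generating a non-abelian free semigroup.

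In the polynomial case, the path-in-$U$ constraint can only lengthen an admissible expression, so $|\gamma|_{S,U,x}\geq|\gamma|_S$ for every $\gamma\in\Gamma_{U,x}$. Hence the $d_{S,U,x}$-ball of radius $r$ in $\Gamma_{U,x}$ injects into the $d_S$-ball of radius $r$ in $\Gamma$, which has $O(r^d)$ elements. This bounds the growth of every germ cover uniformly by a polynomial of degree $\leq d$.

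In the exponential case, fix $x$ with $\fG_x$ infinite; I want to embed a thick copy of $\langle a,b\rangle^+$ into a $d_{S,U,x}$-ball. Pick a relatively compact open neighborhood $W$ of $x$ with $\overline W\subset U$, and via the strong form of the Tits alternative choose $a,b\in\Gamma$ lying in a neighborhood $\Omega$ of $1\in G$ small enough that $a\cdot\overline W\cup b\cdot\overline W\subset U$, with $a,b$ generating a free semigroup. Fix specific $S$-expressions for $a$ and $b$ of some bounded length $K$, and (this is the key step) arrange that all partial products of these expressions, when applied to any $y\in W$, remain inside $U$; by construction of the free semigroup, each word $w=w_n\cdots w_1\in\{a,b\}^n$ then admits an $S$-expression of length $\leq Kn$ whose partial products applied to $x$ stay in $U$. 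Freeness yields $2^n$ distinct elements $w\in\Gamma_{U,x}$ with $|w|_{S,U,x}\leq Kn$, so the $d_{S,U,x}$-ball of radius $Kn$ contains $\geq 2^n$ points and $\fG_x$ has exponential growth.

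The main obstacle is the last parenthetical step of the exponential case: the topological Tits alternative produces $a,b$ close to $1\in G$, but their $S$-expressions are intrinsic to a fixed a priori generating set $S$ and need not be short or localized in $G$, so intermediate partial products may escape $U$. Overcoming this requires combining the continuous dependence of the $G$-action on $G/P$ (which, for $\Omega$ small, controls $a\cdot W$ and $b\cdot W$) with the recurrence of $E$: by~\eqref{E^N}, any orbit point in $U$ is within $N$ steps of the chosen subset $V$, and this uniform return bound lets one splice short detours into the $S$-expressions of $a$ and $b$ so that all partial products remain in $U$, at the cost of inflating $K$ by a constant factor independent of $n$. Once this local-to-uniform control is established, the two horns of the dichotomy assemble into the theorem.
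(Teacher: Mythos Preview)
Your polynomial case is correct and is exactly what the paper does: $d_S\le d_{S,U,x}$ on $\Gamma_{U,x}$, so balls inject and the growth of every germ cover is dominated by that of $(\Gamma,d_S)$.

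The exponential case, however, has a genuine gap, and your own ``main obstacle'' paragraph locates it accurately. Having $a,b\in\Gamma$ close to $1\in G$ controls how $a,b$ move points of $G/P$, but says nothing about $|a|_{S,U,y}$ or $|b|_{S,U,y}$: an element near the identity in $G$ can have arbitrarily long $S$-expressions, and its partial products may well leave $U$. Your proposed fix via recurrence does not work. Recurrence~\eqref{E^N} lets you return from any point of $U$ to $V$ in at most $N$ steps, but those $N$ steps change the group element; splicing them into an $S$-expression for $a$ produces an expression for a \emph{different} element of $\Gamma$, so you no longer have the free-semigroup structure guaranteeing distinctness of the $2^n$ words. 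There is also no reason the points $w\cdot x$ for words $w\in\{a,b\}^*$ should remain in your small neighbourhood $W$, so the hypothesis $a\cdot\overline W\cup b\cdot\overline W\subset U$ does not propagate.

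The paper avoids this difficulty by invoking a different form of Breuillard--Gelander (their Proposition~10.5): instead of producing free semigroup generators near $1$, one perturbs the \emph{given} generators $s_1,\dots,s_k$ to nearby $t_1,\dots,t_k\in\Gamma$ which freely generate a free semigroup. Before doing so, the paper first enlarges $S$ (using Claims~\ref{cl:B}--\ref{cl:overline B}, which in turn use recurrence in an essential way) so that every point of $\overline U$ lies in $s_i^{-1}\cdot V\cap s_j^{-1}\cdot V$ for some $i<j$; this persists for the perturbed $t_i$. One then \emph{adds} $h_{t_1}^{\pm1},\dots,h_{t_k}^{\pm1}$ to the generating set, so each $t_i$ has length~$1$ in the enlarged metric and the issue of wandering partial products disappears entirely. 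From any $\gamma\cdot x\in U$ two of the $t_i$ send it back into $V\subset U$, producing a binary branching and hence exponential growth with respect to $d_{E\cup E'}$; Corollary~\ref{c:growth orbits and their germ covers} then transfers this back to $d_E$. The key idea you are missing is precisely this: perturb the generators rather than seek elements near~$1$, and absorb the perturbed elements into the generating system.
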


\begin{proof}
  We can suppose that $\Gamma_{U,x}$ is infinite for all $x\in U$, otherwise $\Gamma=\{1\}$ because $\overline\Gamma$ is connected.
  
  Fix any open set $V$ that meets all orbits such that $\overline V\subset U$. By~\eqref{E^N}, after increasing $S$ if necessary, we can suppose that
 	\begin{equation}\label{U subset bigcup_i s_i cdot V}
      		U\subset\bigcup_is_i\cdot V\;.
    	\end{equation}
  Actually, given any relatively compact open subset $U'\subset T$ such that $\overline{U}\subset U'$, we can increase $S$ again to produce a recurrent compact system of generators of $\HH$ on $U'$. Then~\eqref{U subset bigcup_i s_i cdot V} can be applied with $U'$ instead of $U$, obtaining
  	\begin{equation}\label{overline U subset bigcup_i s_i cdot V}
      		\overline U\subset\bigcup_is_i\cdot V\;.
    	\end{equation}
  Since $\overline U$ is compact and $\Gamma$ is not discrete, we can choose elements $s'_1,\dots,s'_k\in\Gamma\setminus S$ close enough to $s_1,\dots,s_k$ such that~\eqref{overline U subset bigcup_i s_i cdot V} also holds using the elements $s'_1,\dots,s'_k$ instead of the elements $s_1,\dots,s_k$. Hence, by adding the elements $s'_1,\dots,s'_k$ and their inverses to $S$, we can also suppose 
    \begin{equation}\label{s_i}
      \overline{U}\subset\bigcup_{i<j}(s_i\cdot V\cap s_j\cdot V)=\bigcup_{i<j}(s_i^{-1}\cdot V\cap s_j^{-1}\cdot V)\;.
    \end{equation}
  
   Assume first that $\Gamma$ is not nilpotent. Thus $\overline\Gamma$ is a non-nilpotent connected Lie group, and therefore we can apply \cite[Proposition~10.5]{BreuillardGelander2007} to its finitely generated dense subgroup $\Gamma$, obtaining that there are elements $t_1,\dots,t_k$ in $\Gamma$, as close as desired to $s_1,\dots,s_k$, respectively, which are free generators of a free semi-group. By the compactness of $\overline U$, if $t_1,\dots,t_k$ are close enough to $s_1,\dots,s_k$, then~\eqref{s_i} gives 
    \begin{equation}\label{t_i}
      \overline U\subset\bigcup_{i<j}(t_i^{-1}\cdot V\cap t_j^{-1}\cdot V)\;.
    \end{equation}
  
  Now, we adapt the argument of the proof of \cite[Lemma~10.6]{BreuillardGelander2007}. Let $\Gamma'\subset\Gamma$ be the subgroup generated by $t_1,\dots,t_k$; thus $S'=\{t_1^{\pm1},\dots,t_k^{\pm1}\}$ is a symmetric set of generators of $\Gamma'$, and $S\cup S'$ is a symmetric set of generators of $\Gamma$. With $E'=\{h_{t_1}^{\pm1},\dots,h_{t_k}^{\pm1}\}$, observe that $E\cup E'$ is a recurrent system of compact generation of $\HH$ on $U$. Given $x\in U$, let $\mathsf{S}(n)$ be the sphere with center the identity element and radius $n\in\N$ in $\Gamma'_{U,x}$ with $d_{S',U,x}$. Let us construct a subset $\mathsf{T}(n)\subset\mathsf{N}(n)$ by induction on $n$. Let $\mathsf{T}(0)=\mathsf{N}(0)$. Now, assume that $\mathsf{T}(n)$ is defined. By~\eqref{t_i}, for each $\gamma\in\mathsf{T}(n)$, we have $\gamma\cdot x\in t_i^{-1}\cdot V\cap t_j^{-1}\cdot V$ for some indices $i<j$. So the points $t_i\gamma\cdot x$ and $t_j\gamma\cdot x$ are in $V$, obtaining that $t_i\gamma,t_j\gamma\in\mathsf{S}(n+1)$. Let $\mathsf{T}(n+1)$ be the set of all elements obtained in this way from elements of $\mathsf{T}(n)$, which are pairwise distinct because $t_1,\dots,t_k$ freely generate a free semigroup. Hence $\card(\mathsf{T}(n+1))\ge2\card(\mathsf{T}(n))$, giving $\card(\mathsf{S}(n))\ge\card(\mathsf{T}(n))\ge2^n$. So $\Gamma'_{U,x}$ has exponential growth with $d_{S',U,x}$. Since $\Gamma'_{U,x}\subset\Gamma_{U,x}$ and $d_{S\cup S',U,x}\le d_{S',U,x}$ on $\Gamma'_{U,x}$, it follows that $\Gamma_{U,x}$ also has exponential growth with $d_{S\cup S',U,x}$. So $\fG_x$ has exponential growth with $d_{E\cup E'}$, obtaining that $\fG_x$ has exponential growth with $d_E$ by Corollary~\ref{c:growth orbits and their germ covers}.
    
  If $\Gamma$ is nilpotent, then it has polynomial growth with respect to $d_S$, and this growth type dominates the growth type of $\Gamma_{U,x}$ with the metric $d_{S,U,x}$ because $d_S\le d_{S,U,x}$ on $\Gamma_{U,x}$.
\end{proof}

Now, according to \cite[Theorem~1]{Blumenthal1979} (see Section~\ref{s: intro}), Theorem~\ref{t:growth transv homog folns} is a direct consequence of Theorem~\ref{t:growth homog pseudogroup} and the observations of Section~\ref{ss:growth leaves}.



\providecommand{\bysame}{\leavevmode\hbox to3em{\hrulefill}\thinspace}
\providecommand{\MR}{\relax\ifhmode\unskip\space\fi MR }
\providecommand{\MRhref}[2]{%
  \href{http://www.ams.org/mathscinet-getitem?mr=#1}{#2}
}
\providecommand{\href}[2]{#2}

\end{document}